\newtheorem*{theorem*}{Theorem}
\newcommand{\mbb}[1]{\mathbb{#1}}
\begin{document}

\tikzset{->-/.style={decoration={
  markings,
  mark=at position #1 with {\arrow{>}}},postaction={decorate}}}

\title[An infinite product on Teichm\"{u}ller space]{An infinite product on the Teichm\"{u}ller space of the once-punctured torus}
\author{Robert Hines}
\address{Mathematics Department\\
Grand Valley State University\\
A-2-178 Mackinac Hall\\
1 Campus Drive\\
Allendale, Michigan 49401 }
\email{hinesro@gvsu.edu}
\date{\today}

\begin{abstract}
We prove the identity
$$
\prod_{\gamma}\left(\frac{e^{l(\gamma)}+1}{e^{l(\gamma)}-1}\right)^{2h}=\exp\left(\frac{l_1+l_2+l_3}{2}\right),
$$
(or
$$
\prod_{\gamma}\left(\frac{t(\gamma)^2}{t(\gamma)^2-4}\right)^h=\frac{t_1+\sqrt{t_1^2-4}}{2}\cdot\frac{t_2+\sqrt{t_2^2-4}}{2}\cdot\frac{t_3+\sqrt{t_3^2-4}}{2}
$$
in trace coordinates), where the product is over all simple closed geodesics on the once-punctured torus, $l(\gamma)=2\operatorname{arccosh}(t(\gamma)/2)$ is the length of the geodesic, and $l_i$ ($t_i$) are the lengths (traces) of any triple of simple geodesics $\{\gamma_i\}$ intersecting at a single point.  The exponent $h=h(\gamma;\{\gamma_i\})$ is a positive integer ``height'' which increases as we move away from the chosen triple $\{\gamma_i\}$ in its orbit under $SL_2(\mbb{Z})$ (see Figure \ref{height} for the ``definition by picture'').  For comparison, a short proof of McShane's identity
$$
\sum_{\gamma}\frac{1}{1+e^{l(\gamma)}}=\frac{1}{2}=\sum_{\gamma}\frac{1-\sqrt{1-4/t(\gamma)^2}}{2}
$$
in the same spirit is given in an appendix.  Both proofs are elementary and proceed by ``integrating'' around the chosen triple $\{\gamma_i\}$ in its Teichm\"{u}ller orbit.
\end{abstract}
\maketitle
\section{Introduction}
The equation
$$
x^2+y^2+z^2=xyz
$$
has a interesting tradition in both geometry and number theory.  In geometry, it appears as a special case of the trace identity (due to Fricke)
$$
\operatorname{tr}(A)^2+\operatorname{tr}(B)^2+\operatorname{tr}(AB)^2=\operatorname{tr}a(A)\operatorname{tr}(B)\operatorname{tr}(AB)+\operatorname{tr}(ABA^{-1}B^{-1})+2, \ A, B\in SL_2
$$
when the trace of the commutator is $-2$.  In this case $G=\langle A,B\rangle\subseteq SL_2(\mbb{R})$ is a free group on two generators and the quotient $G\backslash\mbb{H}$ is a once-punctured torus (with complete hyperbolic metric).  The equation above (for $x,y,z>2$) parameterizes all such marked surfaces.  The Teichm\"{u}ller modular group for this torus is $\Gamma=SL_2(\mbb{Z})$ which acts in a somewhat complicated fashion in these coordinates (although easy to specify on generators).

In number theory, Markoff (\cite{markoff1}, \cite{markoff2}) discovered that the ``worst approximable'' irrational numbers, (or the indefinite binary quadratic forms with largest normalized minima, or the lowest lying geodesics on the modular surface $\Gamma\backslash\mbb{H}$) have a rigid combinatorial structure parameterized by positive integer solutions to the Diophantine equation
$$
X^2+Y^2+Z^2=3XYZ.
$$

A connection (first enunciated by Harvey Cohn, cf. \cite{cohn}) between the two can be made through the ``modular torus'', $\Gamma'\backslash\mbb{H}$, where $\Gamma'$ is the commutator subgroup of $SL_2(\mbb{Z})$.  This is the maximally symmetric torus, corresponding to $(x,y,z)=(3,3,3)$ (whose Teichm\"{u}ller orbit is exactly the integer solutions of the Markoff equation).  By taking generators for the commutator subgroup, e.g. 
$$
A=M_{\frac{0}{1}}=
\left(
\begin{array}{cc}
5&2\\
2&1\\
\end{array}
\right)=2+\frac{1}{2+\frac{1}{z}}, \ 
B=M_{\frac{1}{1}}=
\left(
\begin{array}{cc}
2&1\\
1&1\\
\end{array}
\right)=1+\frac{1}{1+\frac{1}{z}},
$$
one can generate the Markoff forms as the fixed-point forms $cx^2+(d-a)xy-by^2$ of the matrices inductively defined by
$$
\left(
\begin{array}{cc}
a&b\\
c&d\\
\end{array}
\right)=
M_{p/q}=M_{p'/q'}M_{p''/q''}, \ \frac{p'}{q'}<\frac{p}{q}=\frac{p'+q'}{p''+q''}<\frac{p''}{q''}, \ p''q'-p'q''=1.
$$
The traces of the triples of ``adjacent'' matrices satisfy $x^2+y^2+z^2=xyz$ (as would any collection constructed in a similar fashion from a pair of generators of the fundamental group of a once-punctured hyperbolic torus).

It is a challenge to determine which properties of the discrete Markoff spectrum are arithmetic in nature and particular to the modular torus, and which are generic for once-punctured tori.  For instance, a long-standing question of Frobenius \cite{frobenius} is whether or not the maximum coordinate of a Markoff triple $(X,Y,Z)$ uniquely determines the triple (in other words, whether or not there are repetitions in the traces of the matrices $M_{p/q}$ above).  This uniqueness property for the length spectrum (up to automorphisms) does \textit{not} hold for generic tori, cf. \cite{mcshaneparlier}.  However, the question/conjecture has extensive numerical verification and uniqueness has been proven for various subsets of the Markoff numbers (e.g. \cite{baragar}, \cite{button}).
%

The Markoff equation appears in various contexts; e.g. Rademacher reciprocity for Dedekind sums (cf. \cite{rademacher} and also \cite{hirzebruchzagierbook} Ch. II $\S8$ for a topological interpretation and generalization) and paramterization of ``exceptional'' bundles on $P^2(\mbb{C})$ (vector bundles $E\to P^2$, with $\operatorname{Hom}(E,E)=\mbb{C}$ and $\operatorname{Ext}^i(E,E)=0$ for $i\geq1$) \cite{rudakov}.  For a readable introduction, references, etc. for this circle of ideas (with a focus on the unicity conjecture), see the recent book of \cite{aignerbook}.  A discussion of the coordinate change between the Teichm\"{u}ller spaces on flat and hyperbolic once-punctured tori (and an interesting differential equation) is given in \cite{keen}.

Of particular background interest is the rate of growth of Markoff numbers, considered in \cite{zagier}, and the generalizations to the rate of growth of lengths of simple closed geodesics on the once-punctured torus (\cite{mcshanerivin1}, \cite{mcshanerivin2}) and arbitrary surfaces (\cite{mirz}).  Given a hyperbolic once-punctured torus $X$, the length function on simple closed geodesics extends to a norm on $H_1(X;\mbb{R})$.  This stems from the fact that each non-trivial homology class in $H_1(X,\mbb{Z})$ has a unique shortest geodesic ``multi-curve'' representative, and primitive homology classes are represented by simple closed geodesics.  Counting simple geodesics of length $\leq L$ is then equivalent to counting primitive lattice points in $L\cdot B_X$, where $B_X$ is the unit ball in this normed space.  See \cite{mcshanerivin1}, \cite{mcshanerivin2}, for details.  The product identity we prove below can be viewed as coming from an analysis of the unit sphere $\partial B_X$.

Long-standing open problems and background aside, below we prove the following product identity over all simple closed geodesics on a once punctured torus.  If $(a,b,c)$, $a,b,c>2$, satisfies $a^2+b^2+c^2=abc$ then
$$
\prod_{\gamma}\left(\frac{t^2}{t^2-4}\right)^h=\frac{a+\sqrt{a^2-4}}{2}\cdot\frac{b+\sqrt{b^2-4}}{2}\cdot\frac{c+\sqrt{c^2-4}}{2},
$$
or rephrased
$$
\prod_{\gamma}\left(\frac{e^{l(\gamma)}+1}{e^{l(\gamma)}-1}\right)^{2h}=\exp\left(\frac{l(\gamma_1)+l(\gamma_2)+l(\gamma_3)}{2}\right)
$$
with ``traces'' $t(\gamma)=2\cosh(l(\gamma)/2)$, $a+b+c=t(\gamma_1)+t(\gamma_2)+t(\gamma_3)$, and $h$ is a positive integer height parameter defined as follows (see Figure \ref{height}).

Embedd a countably infinite 3-regular graph in the plane.  Choose a vertex $v_0$ and and direct all edges away from $v_0$.  Label all three of the regions of the complement of the graph meeting at the chosen vertex with the number 1 (these are the heights corresponding to $a$, $b$, and $c$).  For each directed edge $e$, the region at the target of the edge is to be labeled with the sum of the labels of the two regions meeting at $e$.  We can also label each region with a solution to the Markoff equation (starting with $a$, $b$, $c$ in the regions where $h=1$); the region at the target of the edge $e$ is labeled with $xy-z$ where $x$ and $y$ are the traces to the left and right of the edge, and $z$ is the trace of the region at the source of $e$.  This topograph defines the function $h(\gamma)$.  For an explicit forumla on the projective line $P^1(\mbb{Q})$ (with $a$, $b$, $c$ at $0/1$, $1/1$, $1/0$ in some order), we have $h([p:q])=\max\{|p|,|q|,|p-q|\}$.

\begin{figure}[!hbtp]
\begin{center}
\includegraphics[scale=0.5]{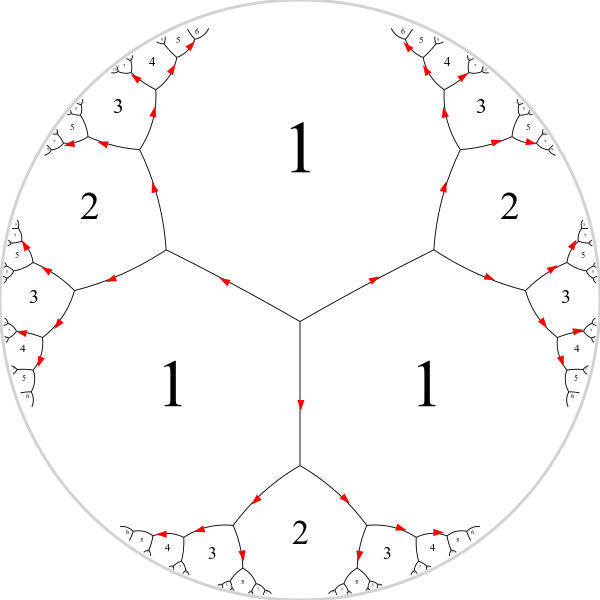}
\end{center}
\caption{A definition by picture of the height $h(p,q)=\max\{|p|,|q|,|p-q|\}$ on the projective line $P^1(\mbb{Q})$.}\label{height}
\end{figure}

The identity we wish to prove is reminiscent of McShane's identity
$$
\sum_{\gamma}\frac{1}{1+e^{l(\gamma)}}=\frac{1}{2}=\sum_{\gamma}\frac{1-\sqrt{1-4/t(\gamma)^2}}{2},
$$
the sum over all simple closed geodesics on a once-punctured torus and $l(\gamma)$ is the length of $\gamma$.  We provide a proof of McShane's identity similar to that of the product identity below in an appendix for comparison.  Both proofs proceed by ``integrating'' around a chosen point in the Teichm\"{u}ller modular orbit of a once-punctured torus, but neither requires any specialized geometric knowledge.
\section*{Acknowledgments}
The author thanks Martin Weissman for allowing the use of his well-commented code to produce Figure \ref{height} and Greg McShane for providing references concerning the length-norm on homology.
\section{Proof of the product identity}
Choose real numbers $a$, $b$, $c>2$ satisfying $a^2+b^2+c^2=abc$.  We will construct the infinite product from three pieces identical in form, one for each pair $(a,b)$, $(b,c)$, $(c,a)$, so we focus on the construction of the $(a,b)$ piece.  For each $p/q\in\mbb{Q}\cap[0,1]\cup{1/0}$, define $t_{p/q}$ recursively by
$$
t=t_{p/q}=t_{p'/q'}t_{p''/q''}-t_{\tilde{p}/\tilde{q}}, \ t_{0/1}=a, \ t_{1/1}=b, \ t_{1/0}=c
$$
where $p'/q'<p''/q''$ are adjacent rationals ($p''q'-p'q''=1$), $p/q=(p'+p'')/(q'+q'')$ is their Farey sum, and $\tilde{p}/\tilde{q}$ is the other rational adjacent to both $p'/q'$ and $p''/q''$ (see Figure \ref{decorate}).  Let
$$
l=l(p/q)=2\operatorname{arccosh}(t_{p/q}/2)=2\log\left(\frac{t+\sqrt{t^2-4}}{2}\right)
$$
and define
$$
F:\mbb{Q}\cap[0,1]\to\mbb{R}, \ F(p/q)=\frac{l(p/q)}{q}.
$$
See Figure \ref{fock} for four instances of this function.

Although not required to follow the proof, we can relate this function to the length-norm unit ball of $H_1(X;\mbb{R})\cong\mbb{R}^2$ ($X$ the once-punctured torus) as follows.  Suppose $a=\operatorname{tr}(A)$, $b=\operatorname{tr}(B)$, and $c=\operatorname{tr}(C)$ are the (postive) traces of hyperbolic elements $A$, $B$, $C\in PSL_2(\mbb{R})$ (with $\pi_1(X)\cong\langle A,B\rangle$ free on two generators) projecting to the representatives of the homology elements $(1,0)$, $(1,1)$, and $(0,1)$ (i.e. we've chosen a basis for homology).  Then the coordinates of points on the unit ball $B_X$ coming from primitive integral homology classes $(q,p)$ with $0\leq p\leq q$ are $\displaystyle{\left(\frac{q}{l(p/q)},\frac{p}{l(p/q)}\right)}$.  In other words, $F$ above is the reciprocal of the $x$-coordinate as a function of slope on the part of the unit ball corresponding to slopes between zero and one.  See Figure \ref{fock} for some instances of the length-norm unit ball.  The function $F$ was also considered in \cite{fockgoncharov}, where convexity was noted as well.  Smoothness of $F$ at irrational $x$ can also be deduced from \cite{mcshanerivin1}, where some properties of the unit ball are demonstrated and conjectured.

\begin{figure}[!hbtp]
\begin{center}
\includegraphics[scale=0.3]{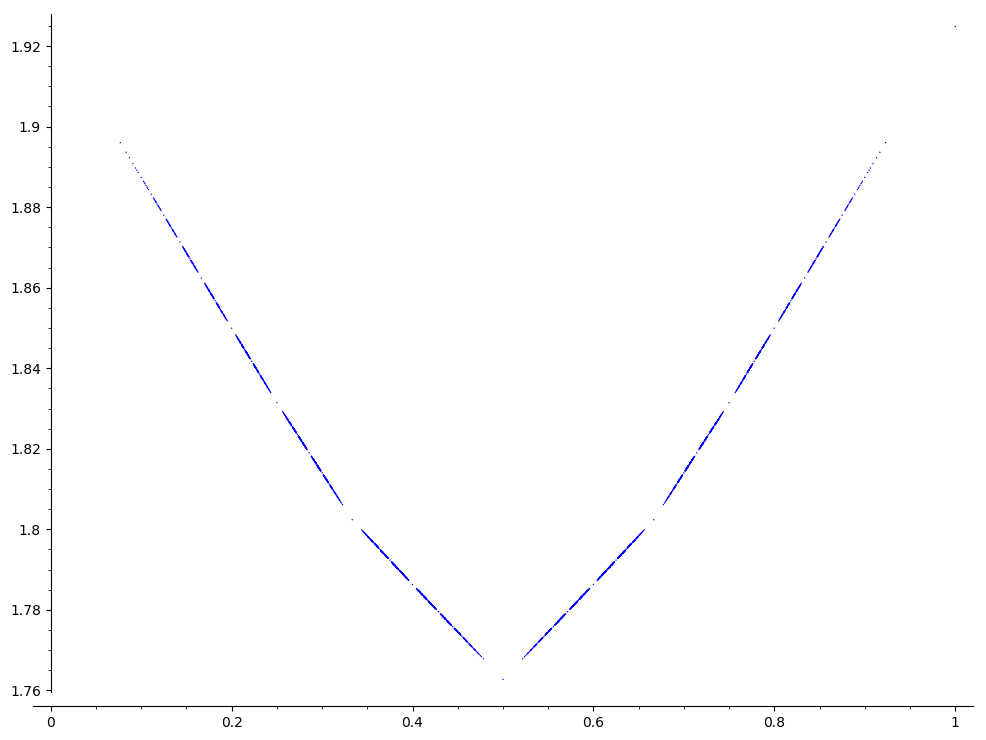}
\includegraphics[scale=0.3]{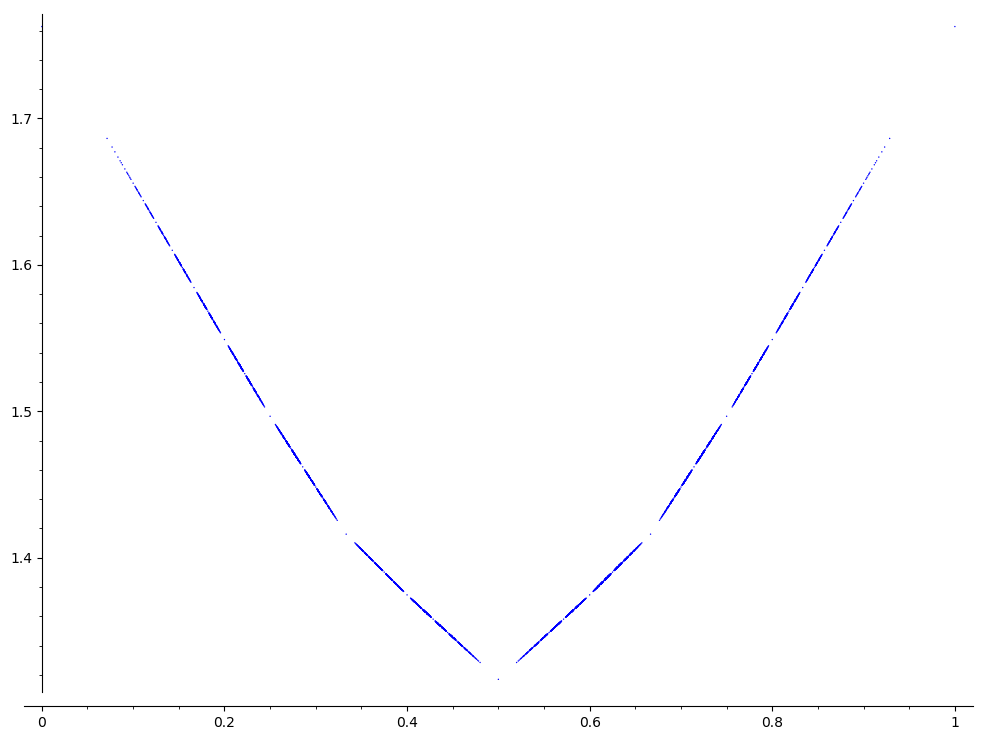}
\includegraphics[scale=0.3]{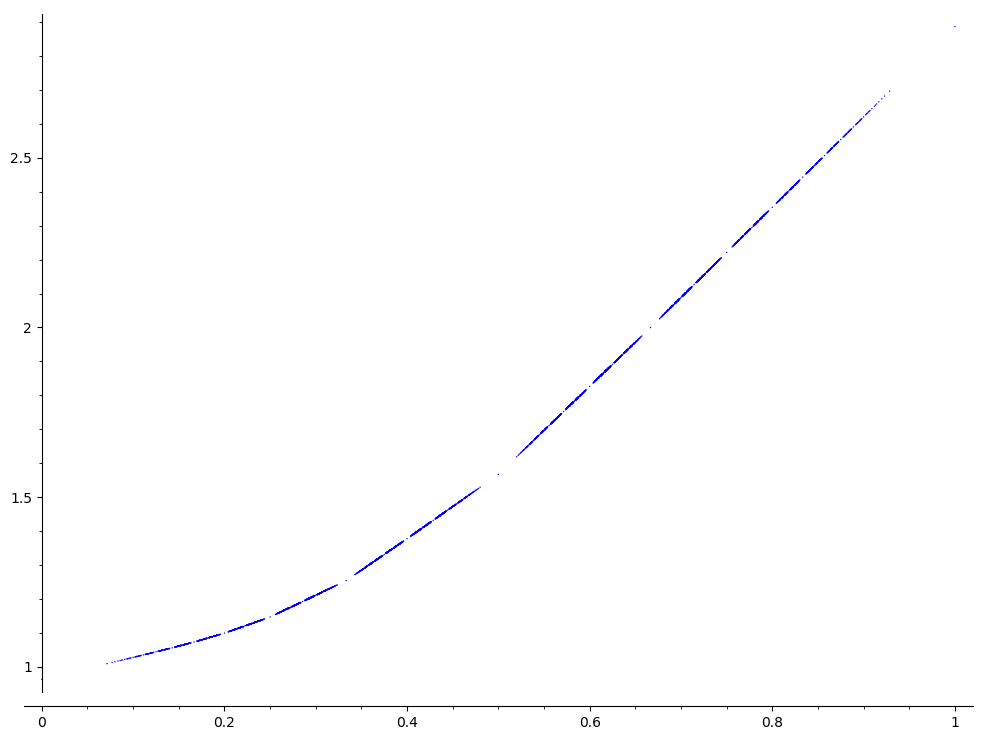}
\includegraphics[scale=0.3]{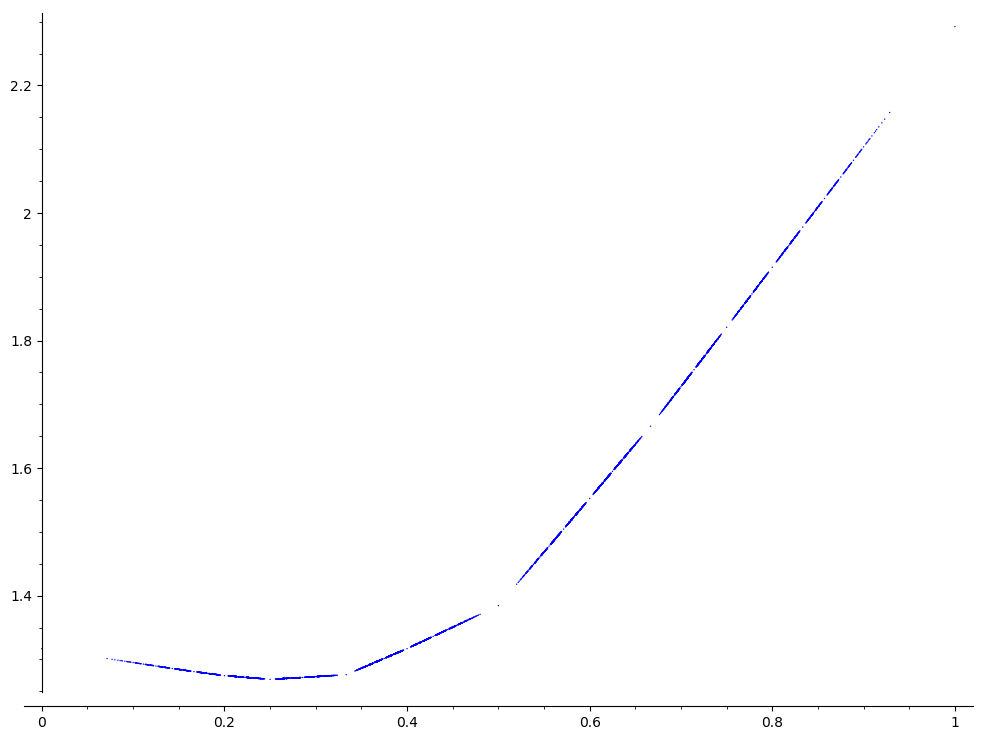}
\includegraphics[scale=0.3]{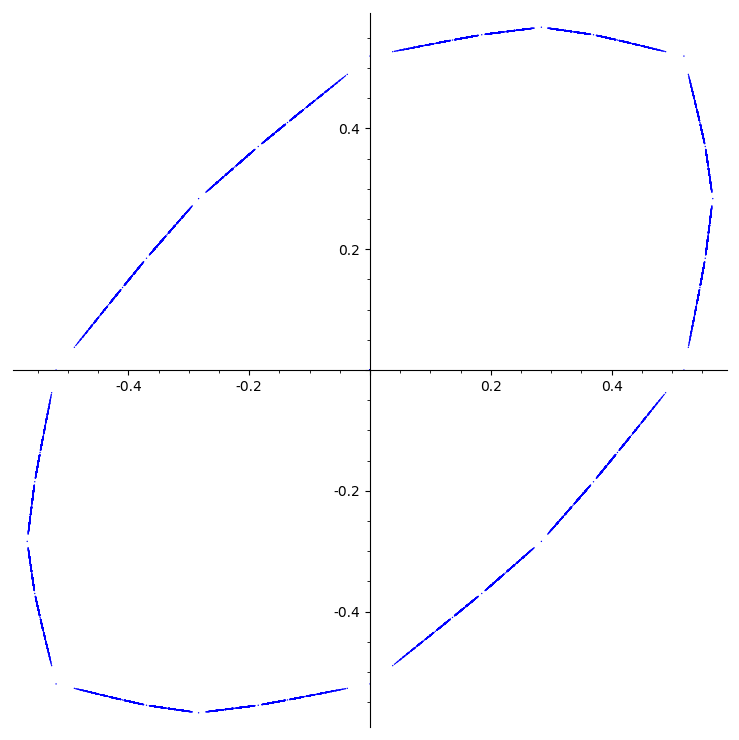}
\includegraphics[scale=0.3]{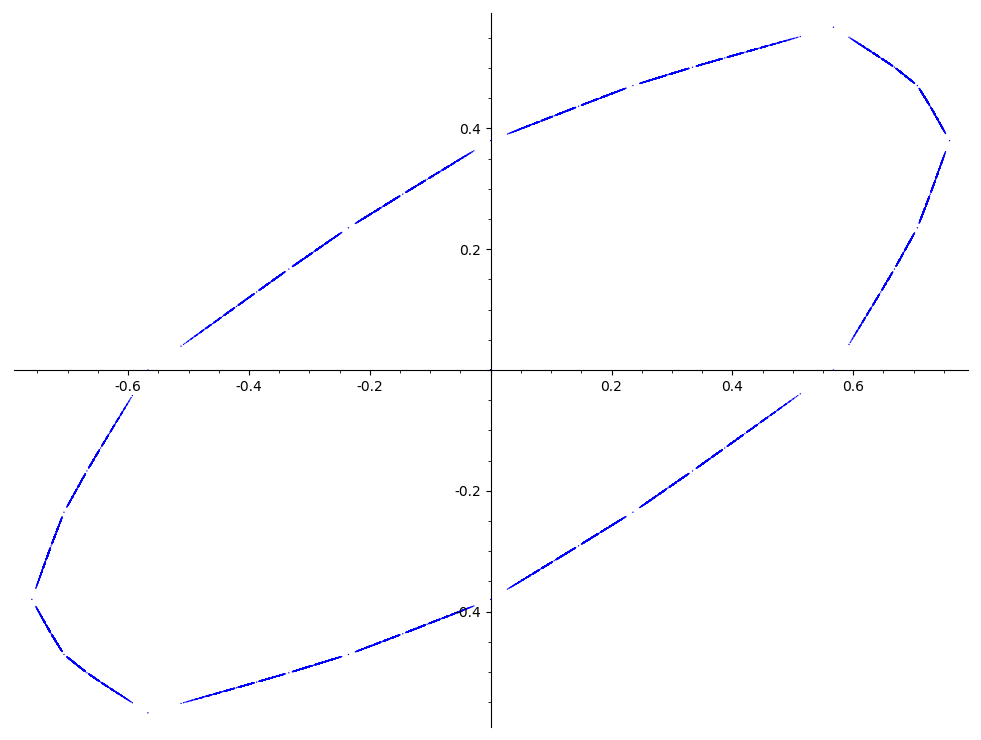}
\includegraphics[scale=0.3]{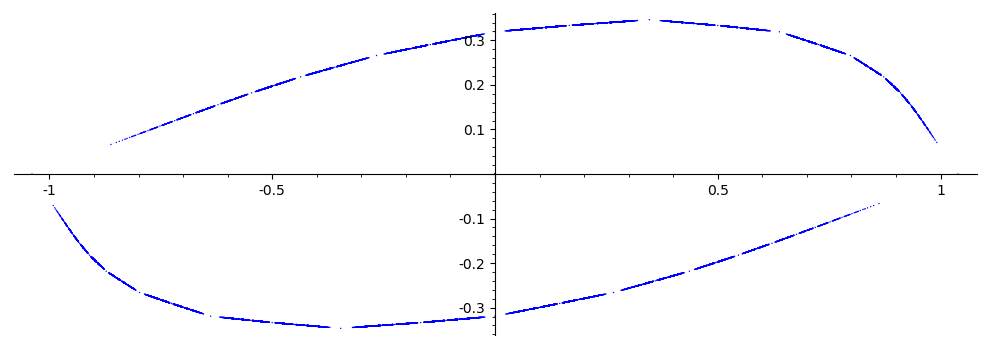}
\includegraphics[scale=0.3]{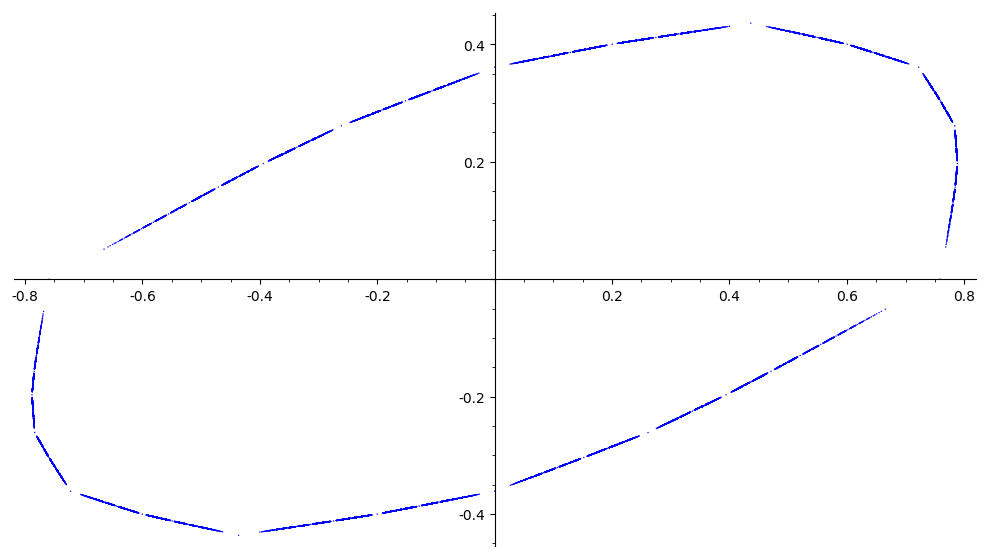}
\captionsetup{justification=raggedright,format=hang}
\caption{The $(a,b)$ version of $F$ and length-norm unit ball for $(a,b,c)$=$(3,3,3)$, $(\sqrt{8},\sqrt{8},4)$, $(\sqrt{5},\sqrt{20},5)$, $(\sqrt{6},\sqrt{12},\sqrt{18})$ (left to right, top to bottom).}\label{fock}
\end{center}
\end{figure}

In what follows, we will adhere to the convention that undecorated variables correspond to their values at $p/q$ and their decorated versions correspond to their values at $p'/q'$, $p''/q''$, $\tilde{p}/\tilde{q}$ (e.g. $t'=t_{p'/q'}$ if $t=t_{p/q}$); see Figure \ref{decorate}.

\begin{figure}[!hbtp]
\begin{center}
\begin{tikzpicture}[scale=2]
\draw[->-=.5, thick](0,1)--(0,0);
\draw[->-=.5, thick](0,0)--({-sqrt(3)},-1);
\draw[->-=.5, thick](0,0)--({sqrt(3)},-1);

\draw[->-=.5, thick]({-sqrt(3)},2)--(0,1);
\draw[->-=.5, thick]({sqrt(3)},2)--(0,1);

\draw(0,-0.5)node{$\frac{p}{q}=\frac{p'+p''}{q'+q''}$};


\draw(0,1.5)node{$\tilde{p}/\tilde{q}$};
\draw(0.75,0.5)node{$p''/q''$};
\draw(-0.75,0.5)node{$p'/q'$};
\end{tikzpicture}

\caption{Decorated variable conventions}\label{decorate}
\end{center}
\end{figure}
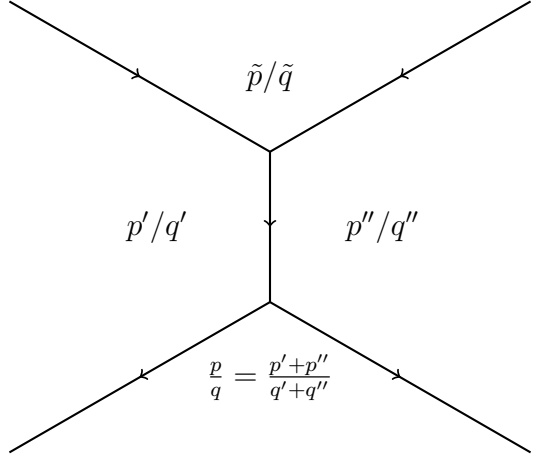

\begin{theorem*}
The function $F$ is convex on $\mbb{Q}\cap[0,1]$ (hence extends to a continuous function $F$ on $[0,1]$).  $F$ is differentiable at irrational $x$, with left and right derivatives at $p/q$ given by
\begin{align*}
\lambda(p/q):&=\lim_{x\to p/q^-}\frac{F(p/q)-F(x)}{p/q-x}=-\log\left(\left(\frac{t'-t''e^{-l/2}}{e^{l/2}-e^{-l/2}}\right)^{q}e^{lq''}\right),\\
\rho(p/q):&=\lim_{x\to p/q^+}\frac{F(p/q)-F(x)}{p/q-x}=\log\left(\left(\frac{t''e^{l/2}-t'}{e^{l/2}-e^{-l/2}}\right)^{q}e^{-lq''}\right).
\end{align*}
\end{theorem*}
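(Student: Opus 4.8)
The plan is to treat convexity and the derivative formulas separately, exploiting the Stern--Brocot (Farey) structure underlying the recursion for $t_{p/q}$. First I would reduce convexity to a single local inequality at each mediant. Writing $p/q=(p'+p'')/(q'+q'')$ with the conventions of Figure \ref{decorate}, the Farey relations $pq'-p'q=1$ and $p''q-pq''=1$ give $p/q-p'/q'=1/(qq')$ and $p''/q''-p/q=1/(qq'')$, so the three-point convexity inequality $\frac{F(p/q)-F(p'/q')}{p/q-p'/q'}\le\frac{F(p''/q'')-F(p/q)}{p''/q''-p/q}$ becomes, after clearing denominators and using $F=l/q$ together with $q=q'+q''$, simply $l\le l'+l''$. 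Thus convexity at every mediant is \emph{equivalent} to subadditivity of length, $l(p/q)\le l(p'/q')+l(p''/q'')$.

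To prove $l\le l'+l''$ I would pass to $\mu=e^{l/2}$, so $t=\mu+\mu^{-1}$ and the claim reads $\mu\le\mu'\mu''$, i.e.\ $t\le P+P^{-1}$ with $P=\mu'\mu''$. Here I would use that $t,\tilde{t}$ are the two roots of the Markoff quadratic $g(z)=z^2-(t't'')z+(t'^2+t''^2)$ in the third coordinate, so $t+\tilde t=t't''$ and $t\tilde t=t'^2+t''^2$. Setting $u=P+P^{-1}$ and $v=Q+Q^{-1}$ with $Q=\mu'/\mu''$, a short computation gives $t't''=u+v$ and $t'^2+t''^2=uv+4$, whence $g(u)=u^2-(u+v)u+(uv+4)=4>0$. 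Since one also checks $u\ge v$, the point $u$ lies to the right of the vertex $t't''/2$, so the positivity $g(u)=4>0$ forces $u$ to exceed the larger root $t$; thus $t<P+P^{-1}$ and $l<l'+l''$ strictly. I would then promote this local convexity to global convexity on $\mbb{Q}\cap[0,1]$ by the standard observation that inserting a mediant replaces a secant slope by two slopes that bracket it ($s_L\le s_{\mathrm{old}}\le s_R$, with $s_L\le s_R$ the inequality just proved); iterating mediant subdivision keeps all secant slopes sorted left to right, yielding convexity for arbitrary triples and hence a continuous convex extension to $[0,1]$.

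For the one-sided derivatives I would compute $\lambda(p/q)=F'_-(p/q)$ as the limit of secant slopes along the Stern--Brocot approximants $x_n=(p'+np)/(q'+nq)\uparrow p/q$. The key structural fact is that the ``other neighbor'' in the recursion telescopes: $\widetilde{x_1}=p''/q''$ and $\widetilde{x_{n+1}}=x_{n-1}$ for $n\ge1$, so the traces $T_n=t_{x_n}$ obey the \emph{linear} recurrence $T_{n+1}=t\,T_n-T_{n-1}$ with $T_0=t'$, $T_1=t't-t''$. Its characteristic roots are $\mu,\mu^{-1}$, giving $T_n=A\mu^n+B\mu^{-n}$ with $A=(t'\mu-t'')/(\mu-\mu^{-1})>0$ (positivity since $T_n\to+\infty$). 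Then $l_{x_n}=2\log T_n+o(1)=2\log A+nl+o(1)$, and since $p/q-x_n=1/(q(q'+nq))$ the secant slope equals $l(q'+nq)-q\,l_{x_n}$; the two $n$-linear terms cancel, leaving a finite limit which rearranges to the stated closed form for $\lambda(p/q)$. The right derivative $\rho(p/q)$ follows identically from $y_n=(p''+np)/(q''+nq)\downarrow p/q$, with $T_0=t''$ and $T_1=t''t-t'$.

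The main obstacle is differentiability at irrational $x$, since convexity alone gives it only off a countable set. Here I would use the nested Farey intervals $[a_n,b_n]\ni x$ (the best one-sided approximants, $a_n\uparrow x$, $b_n\downarrow x$, with denominators tending to infinity). The jump of the monotone derivative $F'_+$ at $x$ is controlled by the increase of $F'_+$ across $(a_n,b_n)$, namely $0\le F'_+(x)-F'_-(x)\le\lambda(b_n)-\rho(a_n)$ for every $n$. The crux is to show $\lambda(b_n)-\rho(a_n)\to0$ using the explicit derivative formulas and the exponential growth of the traces (equivalently, that the curvature measure of $F$ is carried entirely by the corners at rationals). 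Establishing this decay is the one genuinely delicate estimate; everything else is bookkeeping on the Farey tree.
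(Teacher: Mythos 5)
Your outline reproduces the paper's proof almost step for step: the paper makes the same reduction of three-point convexity at a mediant to the subadditivity $l<l'+l''$ (via the slopes $q'l-ql'$, etc.), computes $\lambda$ and $\rho$ from the same linear recurrence $t_{n+1}=t\,t_n-t_{n-1}$ along the Farey neighbors of $p/q$ with the same explicit solution, and reduces differentiability at irrational $x$ to the vanishing of the derivative jump $q(l'+l''-l)$ across shrinking adjacent triples. Your algebraic proof of $l<l'+l''$ (the quadratic $g(z)=z^2-t't''z+(t'^2+t''^2)$ with $g(u)=4$ at $u=2\cosh((l'+l'')/2)$) is a genuine addition, since the paper only asserts subadditivity. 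But the one place you stop---calling the decay $\lambda(b_n)-\rho(a_n)\to0$ ``the one genuinely delicate estimate'' without proving it---is exactly the step the paper spends the second half of its proof on, and it is not optional: convexity gives differentiability off a countable set, but nothing forces that set to be contained in $\mbb{Q}$. The irony is that you already hold the needed tool. Writing $\hat{t}=2\cosh((l'+l'')/2)$ (your $u$), your identity $g(u)=4$ factors as $(\hat{t}-t)(\hat{t}-\tilde{t})=4$, so the subadditivity defect is \emph{quantified}: $\hat{t}-t=4/(\hat{t}-\tilde{t})\leq 4/(t-\tilde{t})$ since $\hat{t}>t$. Concavity of $w\mapsto 2\operatorname{arccosh}(w/2)$ then gives $l'+l''-l\leq 2(\hat{t}-t)/\sqrt{t^2-4}\leq 8/\bigl((t-\tilde{t})\sqrt{t^2-4}\bigr)$, and since $t\geq e^{l/2}=e^{qF(p/q)/2}$ grows exponentially in $q$ along any approach to irrational $x$ while $\tilde{t}/t\to0$, the linear factor $q$ is crushed and $q(l'+l''-l)\to0$. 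This is precisely the paper's ``strengthened triangle inequality'' obtained by subtracting the Markoff equation from the additive Fricke relation $x^2+y^2+z^2=xyz+4$; your route to it through $g$ is the same computation in different clothes.

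One further caution: your telescoping limit, carried out honestly, gives $\lambda(p/q)=-\bigl(lq''+2q\log A\bigr)$ with $A=(t'\mu-t'')/(\mu-\mu^{-1})$, i.e.\ exponent $2q$ on the ratio $\bigl(t'-t''e^{-l/2}\bigr)/\bigl(e^{l/2}-e^{-l/2}\bigr)$---it does \emph{not} ``rearrange to the stated closed form,'' which carries exponent $q$. Your $2q$ agrees with the limit of the paper's own displayed $\exp(S_n)=\bigl(\tfrac{t_n+\sqrt{t_n^2-4}}{2}\bigr)^{2q}e^{-nlq}e^{-lq''}$, and with numerics: for $(a,b,c)=(3,3,3)$ at $p/q=1/2$ the left secant slopes converge to $\approx-0.2356$, the value of the $2q$-formula, whereas the $q$-version evaluates to $\approx-1.881$. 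So do not paper over the mismatch in your last rearrangement; the derivative jump is then $\exp(\rho-\lambda)=\coth^{4q}(l/2)$, and the subsequent product identities arise as the square root of the telescoped relation (one checks $\exp(\lambda(1/1)-\rho(0/1))$ with the $2q$-formulas is the square of the quotient of radicals appearing in the paper, so the final infinite product is unaffected). Apart from this bookkeeping and the unproven decay estimate, your approach is the paper's.
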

\begin{proof}
First note that $l<l'+l''$, which follows from the triangle inequality in two-dimensional hyperbolic space or by a direct argument.  Consider the difference quotients from the left and right at a triple of adjacent rationals $p'/q'<p/q<p''/q''$.  We have (with primes denoting function values per our convention and not derivatives)
\begin{align*}
\frac{F-F'}{p/q-p'/q'}&=q'l-ql'=q'l-(q'+q'')l'<q'(l'+l'')-(q'+q'')l'=q'l''-q''l',\\
\frac{F''-F}{p''/q''-p/q}&=ql''-q''l=(q'+q'')l''-q''l>(q'+q'')l''-q''(l'+l'')=q'l''-q''l',\\
&\Rightarrow\frac{F-F'}{p/q-p'/q'}<\frac{F''-F'}{p''/q''-p'/q'}<\frac{F''-F}{p''/q''-p/q},
\end{align*}
i.e. the adjacent slope to the left at $p/q$ is less than the adjacent slope to the right of $p/q$.  Two non-adjacent rationals can be favorably compared along a chain of adjacent rationals, proving convexity.

We want to take limits of the left and right slopes along the rationals adjacent to $p/q$.  So as not to break our notational conventions, we fix $p'/q'<p/q<p''/q''$ (and the various decorated and undecorated variables) and let $n$ be the ``dynamic'' variable in what follows.  The rationals adjacent to $p/q$ are
$$
x_n=\frac{p_n}{q_n}=\frac{|p''+np|}{|q''+nq|}, \ n\in\mbb{Z},
$$
and the corresponding slope between $x_n$ and $p/q$ is
$$
S_n=\frac{F(x_n)-F(p/q)}{x_n-p/q}=\left\{
\begin{array}{cc}
ql_n-lq_n&n\geq0\\
lq_n-ql_n&n\leq-1\\
\end{array}\right.,
$$
where $l_n=l(x_n)$.  The traces $t_n=t_{x_n}$ satisfy a linear recurrence
$$
t_{n+2}=tt_{n+1}-t_{n}, \ t_{-1}=t', \ t_0=t''
$$
with solution
$$
t_n=\frac{t''e^{l/2}-t'}{e^{l/2}-e^{-l/2}}e^{nl/2}+\frac{t'-t''e^{-l/2}}{e^{l/2}-e^{-l/2}}e^{-nl/2}, \ e^{l/2}=\frac{t+\sqrt{t^2-4}}{2}.
$$
We have (for $n\geq0$)
\begin{align*}
\exp(S_n)&=\exp(ql_n-lq_n)=\exp(ql_n-l(q''+nq))=\left(\frac{e^{l_n}}{e^{nl}}\right)^qe^{-lq''}\\
&=\left(\frac{t_n+\sqrt{t_n^2-4}}{2}\right)^{2q}e^{-nlq}e^{-lq''}\xrightarrow{n\to\infty}\left(\frac{t''e^{l/2}-t'}{e^{l/2}-e^{-l/2}}\right)^qe^{-lq''},
\end{align*}
and (for $n\leq-1$)
\begin{align*}
\exp(S_n)&=\exp(lq_n-ql_n)=\exp(l|q''+nq|-ql_n)=\left(\frac{e^{-nl}}{e^{l_n}}\right)^qe^{-lq''}\\
&=\left(\frac{2}{t_n+\sqrt{t_n^2-4}}\right)^{2q}e^{-nlq}e^{-lq''}\xrightarrow{n\to-\infty}\left(\frac{t'-t''e^{-l/2}}{e^{l/2}-e^{-l/2}}\right)^{-q}e^{-lq''}.
\end{align*}

Finally, we need to address differentiability of $F$ at irrational $x$.  For a triple of adjacent rationals, the difference in right and left slopes is
$$
\frac{F''-F}{p''/q''-p/q}-\frac{F-F'}{p/q-p'/q'}=q(l''+l'-l).
$$
We want to show that this quantity approaches zero as $p/q\to x\in[0,1]\backslash\mbb{Q}$.  To do so, we need to strengthen the triangle inequality.  The argument that follows is a variation of \cite{zagier}, Lemma 2.  In the Fricke trace identity, we get additivity of lengths when $A$ and $B$ commute so that
$$
2\operatorname{arccosh}(z/2)=2\operatorname{arccosh}(x/2)+2\operatorname{arccosh}(y/2)\Longleftrightarrow x^2+y^2+z^2=xyz+4
$$
(which can also be verified directly).  With $\displaystyle{\hat{t}=2\cosh\left(\frac{l'+l''}{2}\right)}$ we have (subtracting the Markoff equation from the ``additive'' Fricke equation)
$$
\hat{t}^2-t^2=t't''(\hat{t}-t)+4=(t+\tilde{t})(\hat{t}-t)+4
$$
and therefore
$$
4=(\hat{t}-t)(\hat{t}+\tilde{t})>(\hat{t}-t)t.
$$
Applying the monotone function $2\operatorname{arccosh}(w/2)$ to the inequality
$$
\hat{t}<t+\frac{4}{t}
$$
gives
$$
l'+l''<2\operatorname{arccosh}\left(\frac{t}{2}+\frac{2}{t}\right)\leq l+\frac{4}{t\sqrt{t^2-4}}=l+\frac{2}{\sinh l}
$$
using a tangent line approximation.  Returning to our considerations above, we have
$$
0\leq q(l'+l''-l)\leq\frac{2q}{\sinh l}\xrightarrow{q, l\to\infty}0
$$
for rationals $p/q\to x$.  The limit is zero since, for instance, $q/l\to 1/F(x)$ converges and $l/\sinh l\to0$.
\end{proof}
While the left and right derivatives are somewhat complicated, depending on triples of values near a vertex, their difference is simpler
$$
\exp(\rho-\lambda)=\left(\frac{(t'-t''e^{-l/2})(t''e^{l/2}-t')}{(e^{l/2}-e^{-l/2})^2}\right)^q=\left(\frac{t^2}{t^2-4}\right)^q=\coth^{2q}(l/2)=\left(\frac{e^l+1}{e^l-1}\right)^{2q}.
$$
Starting with $\rho(0/1)$ and summing over the jumps in the derivative of $F$ at rationals in $(0,1)$ gives (in exponential form)
$$
\exp(\rho(0/1))\prod_{p/q\in(0,1)}\coth^{2q}(l/2)=\exp(\lambda(1/1)).
$$ 
One can check that the expressions given for $\rho$, $\lambda$ hold at $0/1$, $1/1$, respectively with the convention that $t_{1/0}=c$ and $q''(0/1)=1$, $q''(1/1)=0$, so the above can be written as
$$
\prod_{p/q\in(0,1)}\coth^{2q}(l/2)=\exp(\lambda(1/1)-\rho(0/1))=\frac{\sqrt{a^2-4}\sqrt{b^2-4}}{\left(a-c\frac{b-\sqrt{b^2-4}}{2}\right)\left(b-c\frac{a-\sqrt{a^2-4}}{2}\right)}
$$
To cover all of the ``topograph'' (and account for all simple closed geodesics), we repeat the construction above twice more, replacing the pair $(a,b)$ with $(b,c)$ and $(c,a)$, and include terms for $a$, $b$, and $c$, themselves.  We will now replace the variable $q$ in the product with the height $h$, where $h$ is defined as in Figure \ref{height}.  The completed product is (using the relation $a^2+b^2+c^2=abc$ a few times to simplify)
\begin{align*}
\prod_{\gamma}\coth^{2h}(l(\gamma)/2)&=\frac{a^2}{a^2-4}\frac{b^2}{b^2-4}\frac{c^2}{c^2-4}&\text{ terms for the base triple } a, b, c\\
&\cdot\frac{\sqrt{a^2-4}\sqrt{b^2-4}}{\left(a-c\frac{b-\sqrt{b^2-4}}{2}\right)\left(b-c\frac{a-\sqrt{a^2-4}}{2}\right)}&\text{ term for the pair } a, b\\
&\cdot\frac{\sqrt{b^2-4}\sqrt{c^2-4}}{\left(b-a\frac{c-\sqrt{c^2-4}}{2}\right)\left(c-a\frac{b-\sqrt{b^2-4}}{2}\right)}&\text{ term for the pair } b, c\\
&\cdot\frac{\sqrt{c^2-4}\sqrt{a^2-4}}{\left(c-b\frac{a-\sqrt{a^2-4}}{2}\right)\left(a-b\frac{c-\sqrt{c^2-4}}{2}\right)}&\text{ term for the pair } c, a\\
&=\frac{a+\sqrt{a^2-4}}{2}\cdot\frac{b+\sqrt{b^2-4}}{2}\cdot\frac{c+\sqrt{c^2-4}}{2}.
\end{align*}
See Figure \ref{random} for an example of the three $F$ functions for a random triple.
\begin{figure}[!hbtp]
\begin{center}
\includegraphics[scale=0.2]{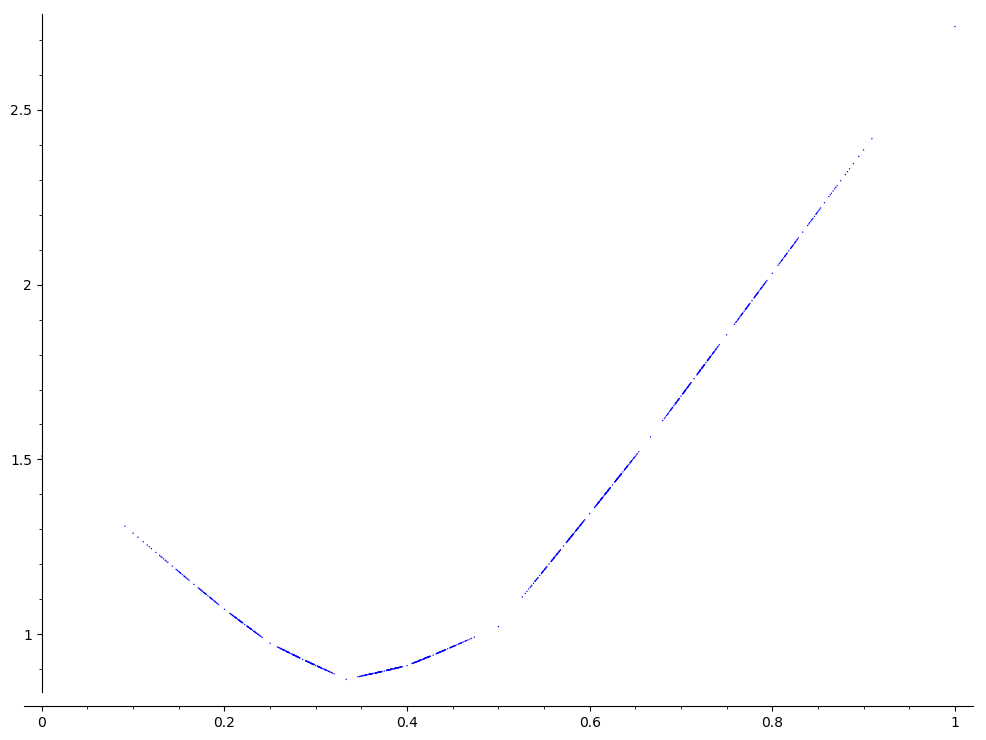}
\includegraphics[scale=0.2]{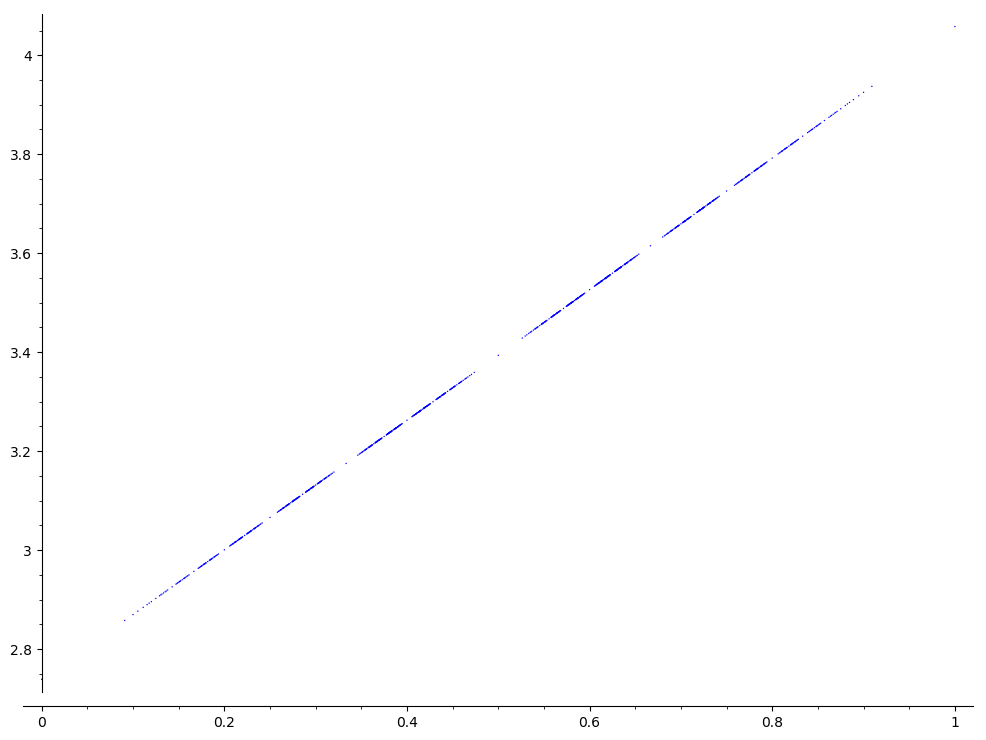}
\includegraphics[scale=0.2]{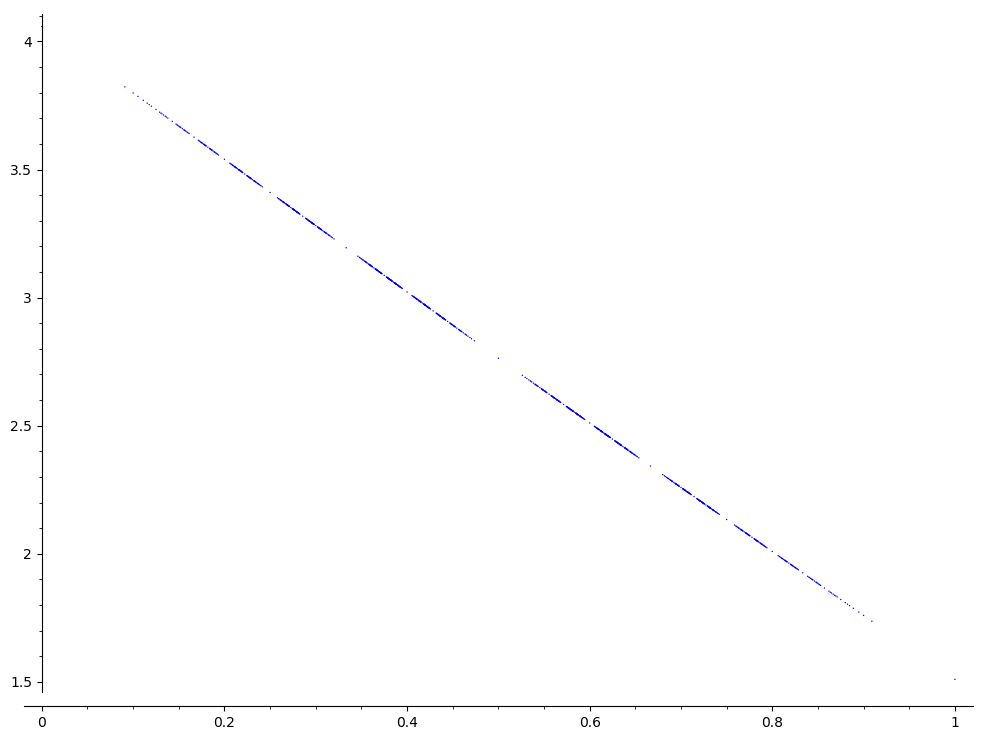}
\captionsetup{justification=raggedright,format=hang}
\caption{The $(a,b)$, $(b,c)$, and $(c,a)$ versions of $F$ for a random triple $(a,b,c)=(2.59740058623, 4.18711171215, 7.73808785195)$.}\label{random}
\end{center}
\end{figure}
\section{Example:  The modular torus at its point of symmetry}
For the maximally symmetric torus, choosing $(a,b,c)=(3,3,3)$, the product is (factors up to $h=7$ shown)
\begin{align*}
\left(\frac{3+\sqrt{5}}{2}\right)^3=&\left(\frac{1^2}{1^2-4/9}\right)^{3\cdot 1}\left(\frac{2^2}{2^2-4/9}\right)^{3\cdot2}\left(\frac{5^2}{5^2-4/9}\right)^{6\cdot3}\left(\frac{13^2}{13^2-4/9}\right)^{6\cdot4}\\
&\left(\frac{29^2}{29^2-4/9}\right)^{6\cdot5}\left(\frac{34^2}{34^2-4/9}\right)^{6\cdot5}\left(\frac{89^2}{89^2-4/9}\right)^{6\cdot6}\left(\frac{169^2}{169^2-4/9}\right)^{6\cdot7}\\
&\left(\frac{194^2}{194^2-4/9}\right)^{6\cdot7}\left(\frac{233^2}{233^2-4/9}\right)^{6\cdot7}\dots
\end{align*}
where the numbers
$$
\{1,2,5,13,29,34,89,169,194,233,\ldots\}
$$
(as a multiset) are the largest elements of triples of positive integer solutions to $x^2+y^2+z^2=3xyz$.  In this case the traces are all divisible by $3$, hence the factor of $3$ in the ``number theory'' version.  Each factor occurs with either multiplicity $3$ (for the ``singular'' Markoff numbers $1$ and $2$) or multiplicity $6$ (for the rest), i.e. the exponent $6\cdot h$ continues indefinitely.

Similarly, one could start with any other Markoff triple to obtain a similar infinite product of rational numbers whose value is the product of three units each generating (the norm 1 cyclic part of) the stabilizers of the Markoff forms.  Or, starting with traces in a number field, one obtains similar infinite products for triple products of relative quadratic units.  The author doesn't know whether or not these products are more than curiosities.
\section{Appendix:  McShane's identity}

In this appendix, we give a proof of McShane's identity
$$
\sum_{\gamma}\frac{1}{1+e^{l(\gamma)}}=\frac{1}{2}=\sum_{\gamma}\frac{1-\sqrt{1-4/t(\gamma)^2}}{2}
$$
by considering the sum over the jump discontinuities in a function to be defined below.  First choose a triple $(a,b,c)$, $a,b,c>2$, satisfying $a^2+b^2+c^2=abc$.  [Once again we will patch together three peices to create the sum above, one for each of $(a,b)$, $(b,c)$, and $(c,a)$, so we'll focus on the $(a,b)$ piece.]  Associated to this choice, we can choose generators (pairwise) for the associated once-punctured torus (cf. \cite{schmidt})
$$
M_{0/1}=\left(
\begin{array}{cc}
c/b&a/b^2\\
a&a-c/b\\
\end{array}
\right), \ 
M_{1/1}=\left(
\begin{array}{cc}
0&-1/b\\
b&b\\
\end{array}
\right), \ 
M_{1/0}=M_{0/1}M_{1/1}^{-1}=\left(
\begin{array}{cc}
c-a/b&c/b^2\\
c&a/b\\
\end{array}
\right).
$$
The reader (or their favorite computer algebra system) can verify that the traces of the pairwise commutators are $-2$ for this triple.  For $p/q\in(0,1)\cap\mbb{Q}$ inductively define matrices
$$
M_{p/q}=M_{p'/q'}M_{p''/q''}, \ \frac{p'}{q'}<\frac{p}{q}=\frac{p'+p''}{q'+q''}<\frac{p''}{q''}, \ p''q'-p'q''=1.
$$
Let $t_{p/q}=\operatorname{tr}(M_{p/q})$, and continue our notational conventions regarding $t'$, $t''$.  For each adjacent triple, the traces satisfy the Markoff--Fricke equation $t^2+t'^2+t''^2=tt't''$.  We will use the notation
$$
M_{p/q}=\left(
\begin{array}{cc}
u&u-v\\
t&t-u\\
\end{array}
\right), \ v=\frac{1+u^2}{t},
$$
for the entries of each matrix (with our primed variable conventions for $u$ and $v$ as well).  One can verify inductively starting with $M_{0/1}$, $M_{1/1}$ that the lower left corner of each matrix is the trace of the matrix.  Multiplying $M_{p'/q'}$ and $M_{p''/q''}$ gives the recurrences
$$
u=u'u''+u't''-v't'', \ t=t'u''+t't''-u't''.
$$
We now consider the function (see Figure \ref{mcshane} for an example graph)
$$
f:[0,1]\cap\mbb{Q}\to\mbb{R}, \ p/q\mapsto u/t.
$$

\begin{figure}[!hbtp]
\begin{center}
\includegraphics[scale=0.3]{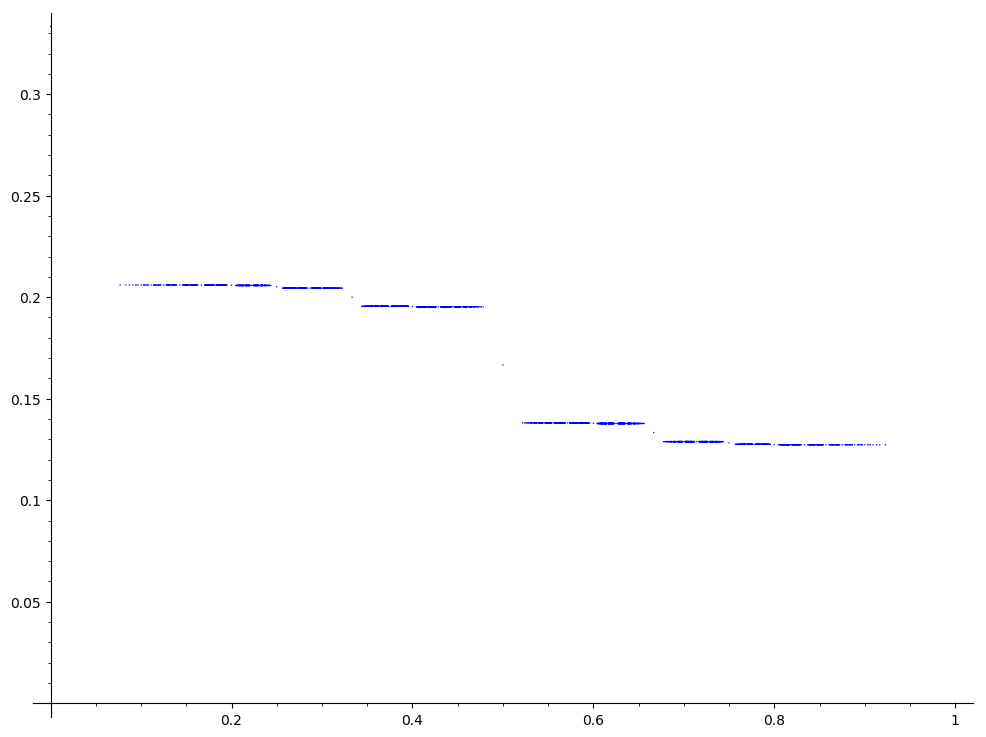}
\includegraphics[scale=0.3]{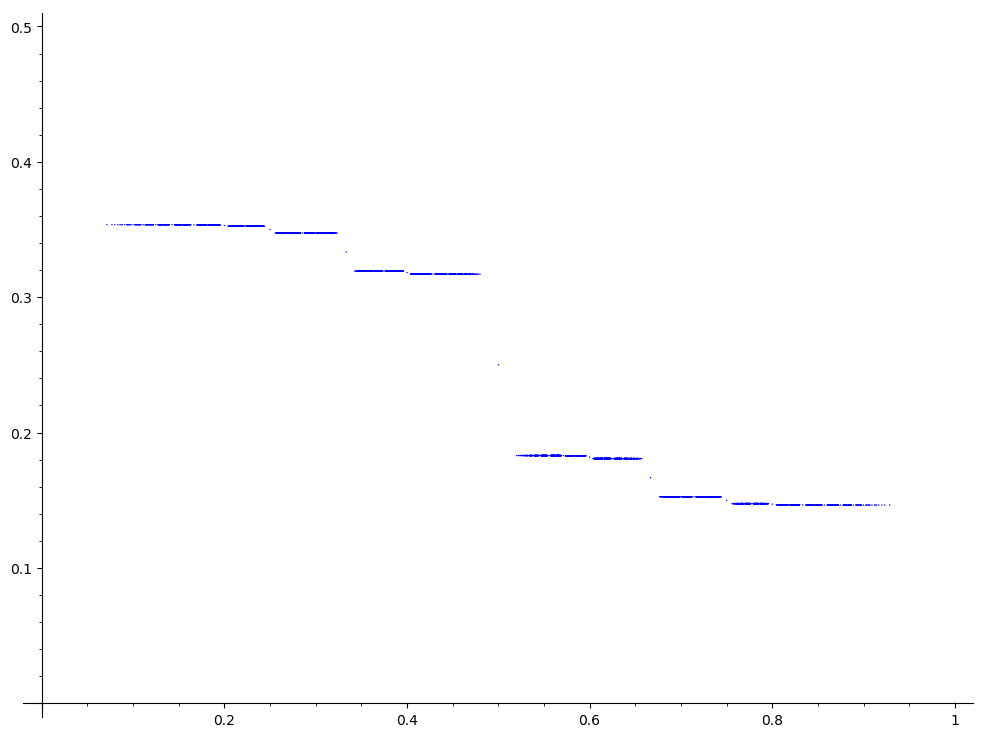}
\includegraphics[scale=0.3]{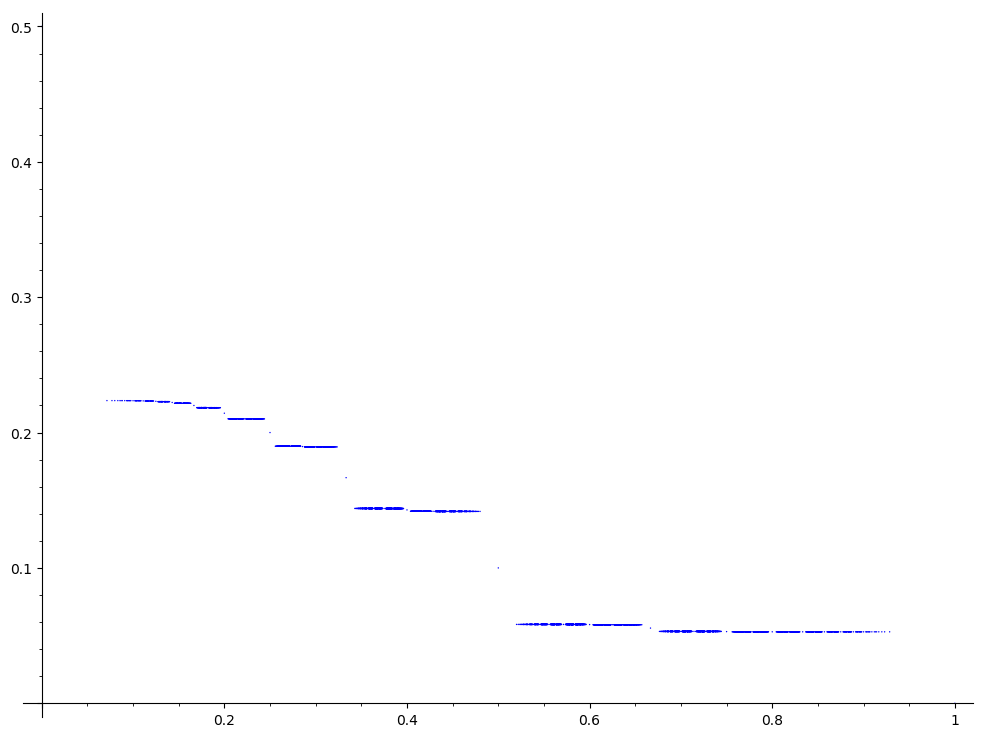}
\includegraphics[scale=0.3]{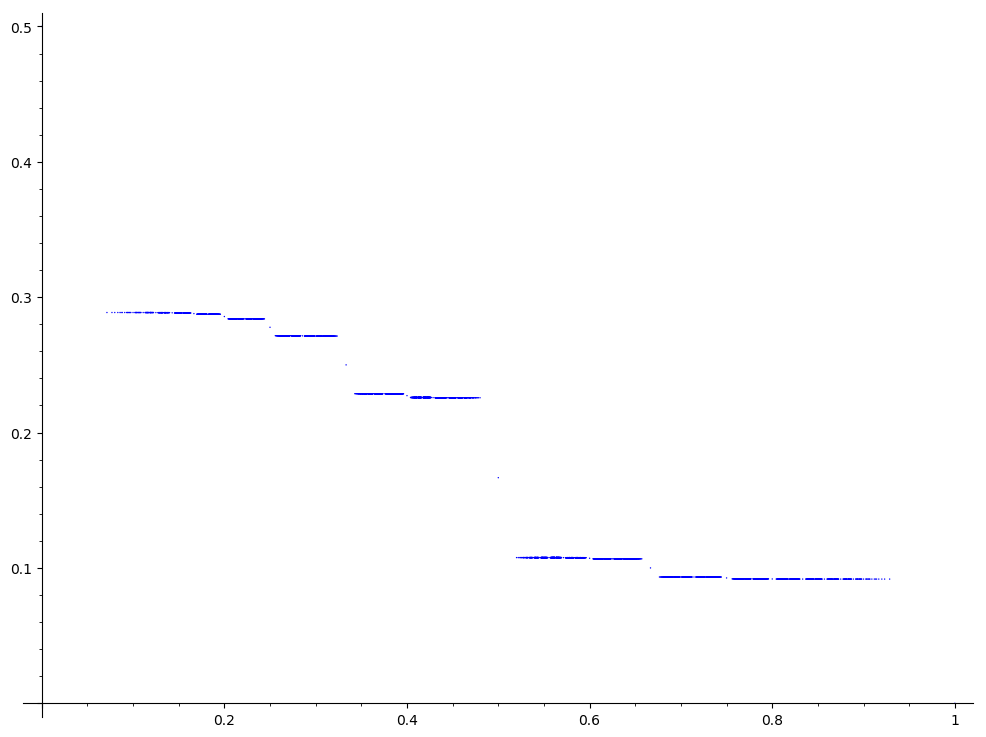}
\captionsetup{justification=raggedright,format=hang}
\caption{The $(a,b)$ version of $f$ for $(a,b,c)$=$(3,3,3)$, $(\sqrt{8},\sqrt{8},4)$, $(\sqrt{5},\sqrt{20},5)$, $(\sqrt{6},\sqrt{12},\sqrt{18})$ (left to right, top to bottom).}\label{mcshane}
\end{center}
\end{figure}

The function $f$ is decreasing (as we'll see), continuous at irrationals, with jumps on both sides at rationals.  The difference between adjacent rationals is
\begin{align*}
f(p''/q'')-f(p'/q')=\frac{u''}{t''}-\frac{u'}{t'}=\frac{u''t'-u't''}{t't''}=\frac{t}{t't''}-1<0,
\end{align*}
using the recurrence for $t$ noted above.  (That the difference is negative follows from the Markoff equation.)  Using the Markoff equation again, we have
$$
f(p''/q'')-f(p'/q')=\frac{-1+\sqrt{1-4\left(\frac{1}{t'^2}+\frac{1}{t''^2}\right)}}{2}.
$$
Now we compute right and left limits:
\begin{align*}
\lim_{\frac{p''}{q''}\to\frac{p'}{q'}^+}f(p''/q'')-f(p'/q')&=\lim_{t''\to\infty}\frac{-1+\sqrt{1-4\left(\frac{1}{t'^2}+\frac{1}{t''^2}\right)}}{2}=\frac{-1+\sqrt{1-4/t'^2}}{2},\\
\lim_{\frac{p'}{q'}\to\frac{p''}{q''}^-}f(p''/q'')-f(p'/q')&=\lim_{t'\to\infty}\frac{-1+\sqrt{1-4\left(\frac{1}{t'^2}+\frac{1}{t''^2}\right)}}{2}=\frac{-1+\sqrt{1-4/t''^2}}{2}.
\end{align*}
In other words, the jump in $f$ at $p/q$ is given by
\begin{align*}
\lim_{x\to p/q^+}f(x)-\lim_{x\to p/q^-}=-1+\sqrt{1-4/t^2}.
\end{align*}
At irrational $x$, taking a sequence of rationals $p'/q'\shortrightarrow x\shortleftarrow p''/q''$ (say along the slow continued fraction expansion of $x$) gives
$$
\lim_{p'/q'\shortrightarrow x\shortleftarrow p''/q''}f(p''/q'')-f(p'/q')=\lim_{t', t''\to\infty}\frac{-1+\sqrt{1-4\left(\frac{1}{t'^2}+\frac{1}{t''^2}\right)}}{2}=0,
$$
proving continutity of $f$ at irrational $x$.

Starting at zero and adding the jumps in $f$ at rationals (including the ``half-jumps'' at 0 and 1) gives
$$
\frac{c}{ab}+\frac{-1+\sqrt{1-4/a^2}}{2}+\sum_{p/q\in(0,1)}\left(-1+\sqrt{1-4/t^2}\right)+\frac{-1+\sqrt{1-4/b^2}}{2}=0.
$$
Repeating the argument above three times and throwing in the terms for $a$, $b$, and $c$ themselves gives
\begin{align*}
\sum_{\gamma}\left(-1+\sqrt{1-4/t^2}\right)&=-3+\sqrt{1-4/a^2}+\sqrt{1-4/a^2}+\sqrt{1-4/a^2}&\text{for } a, b, c\\
&+\frac{1-\sqrt{1-4/a^2}}{2}+\frac{1-\sqrt{1-4/b^2}}{2}-\frac{c}{ab}&\text{for the pair } (a,b)\\
&+\frac{1-\sqrt{1-4/b^2}}{2}+\frac{1-\sqrt{1-4/c^2}}{2}-\frac{a}{bc}&\text{for the pair } (b,c)\\
&+\frac{1-\sqrt{1-4/c^2}}{2}+\frac{1-\sqrt{1-4/a^2}}{2}-\frac{b}{ca}&\text{for the pair } (c,a)\\
&=-1&\text{(Markoff equation)}.
\end{align*}
This gives McShane's identity after noting that
$$
\frac{1}{1+e^l}=\frac{1-\sqrt{1-4/t^2}}{2}
$$
using the trace-length relation
$$
l=2\operatorname{arccosh}(t/2).
$$
See Figure \ref{randomum} for an example of the three $f$ functions for a random triple.

\begin{figure}[!hbtp]
\begin{center}
\includegraphics[scale=0.2]{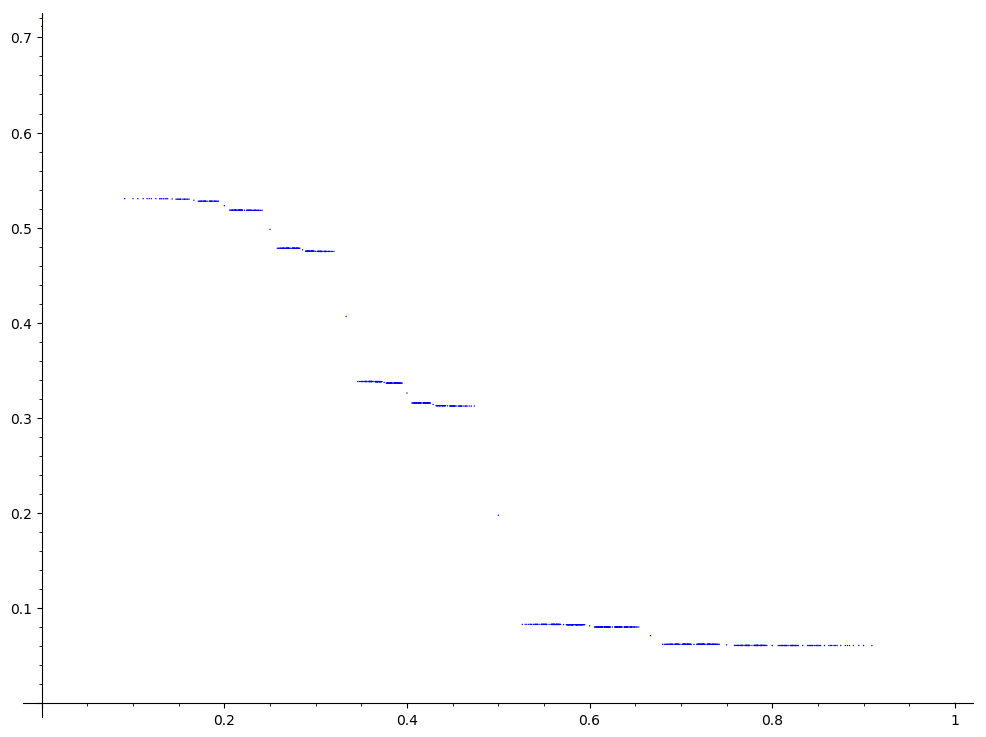}
\includegraphics[scale=0.2]{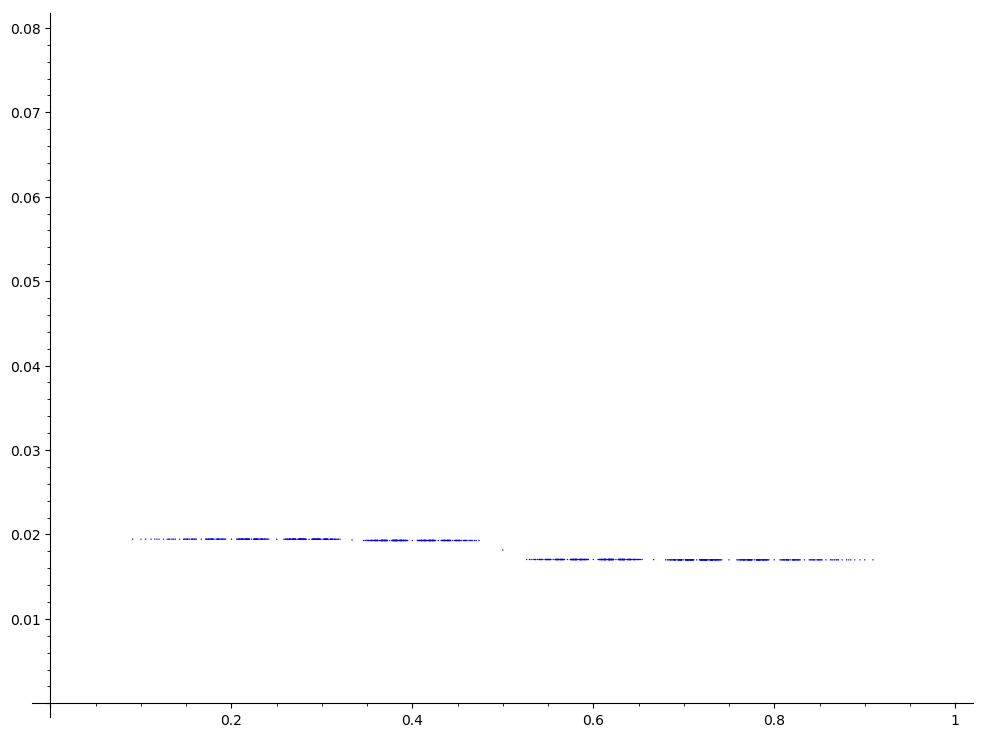}
\includegraphics[scale=0.2]{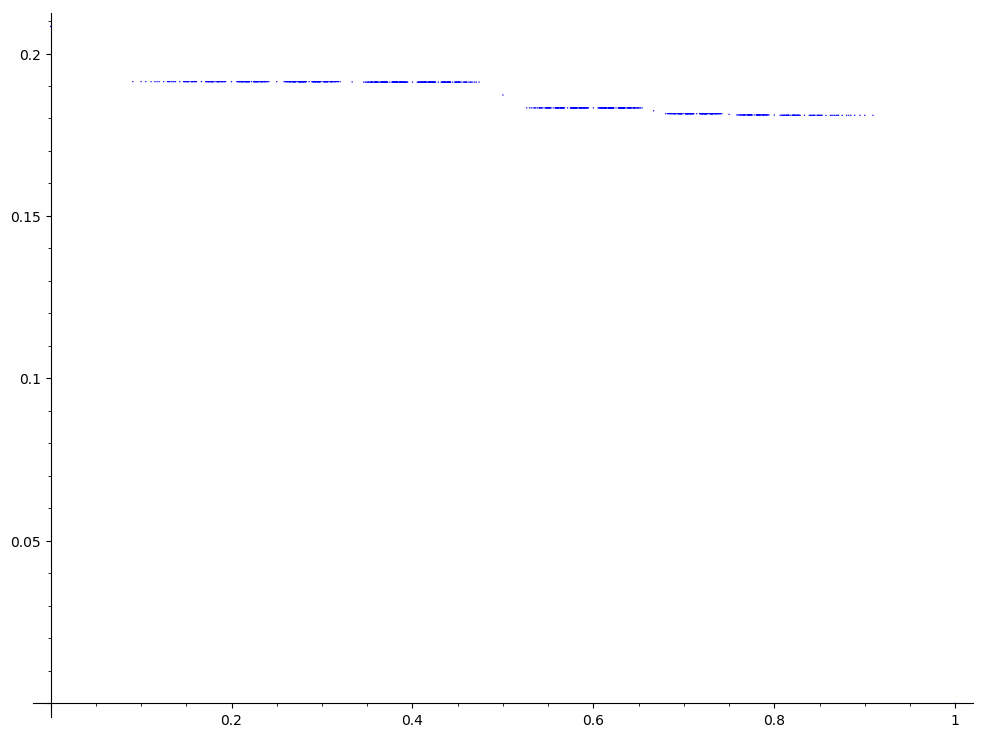}
\captionsetup{justification=raggedright,format=hang}
\caption{The $(a,b)$, $(b,c)$, and $(c,a)$ versions of $f$ for a random triple $(a,b,c)=(2.59740058623, 4.18711171215, 7.73808785195)$.}\label{randomum}
\end{center}
\end{figure}

As a final remark, the function $f$ (say with the triple $(6,3,3)$) is one-third of what is called the ``Cohn index'' in \cite{aignerbook} and has been considered elsewhere to some extent (e.g. \cite{bombieri} Appendix B).  Bounding this ratio using the existence of the jumps at $0/1$ and $1/1$ (along with a little number theory) is enough to prove the uniqueness of some Markoff numbers (prime powers and twice prime powers in \cite{bombieri}, $5p^k$ in \cite{aignerbook} Proposition 4.21).  Transcendence of the values of $f$ at irrational $x$ for the starting triple $(3,3,3)$ can be deduced from \cite{adbu} as noted in \cite{hines}.
\bibliographystyle{alpha}
\nocite{*}
\bibliography{teich_ref}

\begin{thebibliography}{{Mar}79}

\bibitem[AB05]{adbu}
Boris Adamczewski and Yann Bugeaud.
\newblock On the complexity of algebraic numbers. {II}. {C}ontinued fractions.
\newblock {\em Acta Math.}, 195:1--20, 2005.

\bibitem[Aig13]{aignerbook}
Martin Aigner.
\newblock {\em Markov's theorem and 100 years of the uniqueness conjecture}.
\newblock Springer, Cham, 2013.
\newblock A mathematical journey from irrational numbers to perfect matchings.

\bibitem[Bar96]{baragar}
Arthur Baragar.
\newblock On the unicity conjecture for {M}arkoff numbers.
\newblock {\em Canad. Math. Bull.}, 39(1):3--9, 1996.

\bibitem[Bom07]{bombieri}
Enrico Bombieri.
\newblock Continued fractions and the {M}arkoff tree.
\newblock {\em Expo. Math.}, 25(3):187--213, 2007.

\bibitem[Bow96]{bowditch}
B.~H. Bowditch.
\newblock A proof of {M}c{S}hane's identity via {M}arkoff triples.
\newblock {\em Bull. London Math. Soc.}, 28(1):73--78, 1996.

\bibitem[But01]{button}
J.~O. Button.
\newblock Markoff numbers, principal ideals and continued fraction expansions.
\newblock {\em J. Number Theory}, 87(1):77--95, 2001.

\bibitem[Cas57]{casselsbook}
J.~W.~S. Cassels.
\newblock {\em An introduction to {D}iophantine approximation}.
\newblock Cambridge Tracts in Mathematics and Mathematical Physics, No. 45.
  Cambridge University Press, New York, 1957.

\bibitem[Coh55]{cohn}
Harvey Cohn.
\newblock Approach to {M}arkoff's minimal forms through modular functions.
\newblock {\em Ann. of Math. (2)}, 61:1--12, 1955.

\bibitem[FG07]{fockgoncharov}
Vladimir~V. Fock and Alexander~B. Goncharov.
\newblock Dual {T}eichm\"{u}ller and lamination spaces.
\newblock In {\em Handbook of {T}eichm\"{u}ller theory. {V}ol. {I}}, volume~11
  of {\em IRMA Lect. Math. Theor. Phys.}, pages 647--684. Eur. Math. Soc.,
  Z\"{u}rich, 2007.

\bibitem[{Fro}13]{frobenius}
G.~{Frobenius}.
\newblock {\"Uber die {\it Markoff}schen Zahlen.}
\newblock {\em {Berl. Ber.}}, 1913:458--487, 1913.

\bibitem[GR87]{rudakov}
A.~L. Gorodentsev and A.~N. Rudakov.
\newblock Exceptional vector bundles on projective spaces.
\newblock {\em Duke Math. J.}, 54(1):115--130, 1987.

\bibitem[HH93]{hallhuxley}
R.~R. Hall and M.~N. Huxley.
\newblock Dedekind sums and continued fractions.
\newblock {\em Acta Arith.}, 63(1):79--90, 1993.

\bibitem[Hin19]{hines}
R.~Hines.
\newblock {\em Applications of hyperbolic geometry to continued fractions and
  Diophantine approximation}.
\newblock PhD thesis, University of Colorado, Boulder, 2019.

\bibitem[HZ74]{hirzebruchzagierbook}
F.~Hirzebruch and D.~Zagier.
\newblock {\em The {A}tiyah-{S}inger theorem and elementary number theory}.
\newblock Publish or Perish, Inc., Boston, Mass., 1974.
\newblock Mathematics Lecture Series, No. 3.

\bibitem[KRV79]{keen}
L.~Keen, H.~E. Rauch, and A.~T. Vasquez.
\newblock Moduli of punctured tori and the accessory parameter of {L}am\'{e}'s
  equation.
\newblock {\em Trans. Amer. Math. Soc.}, 255:201--230, 1979.

\bibitem[{Mar}79]{markoff1}
A.~A. {Markoff}.
\newblock {Sur les formes quadratiques binaires ind\'efinies.}
\newblock {\em {Math. Ann.}}, 15:381--407, 1879.

\bibitem[Mar80]{markoff2}
A.~Markoff.
\newblock Sur les formes quadratiques binaires ind\'{e}finies.
\newblock {\em Math. Ann.}, 17(3):379--399, 1880.

\bibitem[McS98]{mcshane}
Greg McShane.
\newblock Simple geodesics and a series constant over {T}eichmuller space.
\newblock {\em Invent. Math.}, 132(3):607--632, 1998.

\bibitem[Mir07]{mirz2}
Maryam Mirzakhani.
\newblock Simple geodesics and {W}eil-{P}etersson volumes of moduli spaces of
  bordered {R}iemann surfaces.
\newblock {\em Invent. Math.}, 167(1):179--222, 2007.

\bibitem[Mir08]{mirz}
Maryam Mirzakhani.
\newblock Growth of the number of simple closed geodesics on hyperbolic
  surfaces.
\newblock {\em Ann. of Math. (2)}, 168(1):97--125, 2008.

\bibitem[MP08]{mcshaneparlier}
Greg McShane and Hugo Parlier.
\newblock Multiplicities of simple closed geodesics and hypersurfaces in
  {T}eichm\"{u}ller space.
\newblock {\em Geom. Topol.}, 12(4):1883--1919, 2008.

\bibitem[MR95a]{mcshanerivin2}
Greg McShane and Igor Rivin.
\newblock A norm on homology of surfaces and counting simple geodesics.
\newblock {\em Internat. Math. Res. Notices}, (2):61--69, 1995.

\bibitem[MR95b]{mcshanerivin1}
Greg McShane and Igor Rivin.
\newblock Simple curves on hyperbolic tori.
\newblock {\em C. R. Acad. Sci. Paris S\'{e}r. I Math.}, 320(12):1523--1528,
  1995.

\bibitem[Rad54]{rademacher}
Hans Rademacher.
\newblock Generalization of the reciprocity formula for {D}edekind sums.
\newblock {\em Duke Math. J.}, 21:391--397, 1954.

\bibitem[RG72]{rademacherbook}
Hans Rademacher and Emil Grosswald.
\newblock {\em Dedekind sums}.
\newblock The Mathematical Association of America, Washington, D.C., 1972.
\newblock The Carus Mathematical Monographs, No. 16.

\bibitem[Sar82]{sarnak}
Peter Sarnak.
\newblock Class numbers of indefinite binary quadratic forms.
\newblock {\em J. Number Theory}, 15(2):229--247, 1982.

\bibitem[Sch76]{schmidt}
Asmus~L. Schmidt.
\newblock Minimum of quadratic forms with respect to {F}uchsian groups. {I}.
\newblock {\em J. Reine Angew. Math.}, 286/287:341--368, 1976.

\bibitem[Ser85]{series}
Caroline Series.
\newblock The geometry of {M}arkoff numbers.
\newblock {\em Math. Intelligencer}, 7(3):20--29, 1985.

\bibitem[SV17]{spaldingveselov}
K.~Spalding and A.~P. Veselov.
\newblock Lyapunov spectrum of {M}arkov and {E}uclid trees.
\newblock {\em Nonlinearity}, 30(12):4428--4453, 2017.

\bibitem[Zag82]{zagier}
Don Zagier.
\newblock On the number of {M}arkoff numbers below a given bound.
\newblock {\em Math. Comp.}, 39(160):709--723, 1982.

\end{thebibliography}
\end{document}